\documentclass[11pt,a4paper]{article}
\RequirePackage{amsmath}
\RequirePackage{amssymb,latexsym}
\RequirePackage{amsthm}
\RequirePackage{enumerate}
\RequirePackage[hang,flushmargin,bottom,stable]{footmisc}
\RequirePackage[margin=0.35in,format=hang,font=small,labelfont=bf]{caption}
\RequirePackage[normalem]{ulem}
\RequirePackage{needspace}
\RequirePackage{graphicx,tikz}
\RequirePackage[T1]{fontenc}
\RequirePackage[sc]{mathpazo}
\RequirePackage[colorlinks=true,urlcolor=blue,linkcolor=blue,citecolor=blue]{hyperref}
\usepackage{multirow}
\usepackage{setspace}

\hypersetup{
pdfstartview={XYZ null null 1.00},
pdflang={en-GB},
}

\newtheorem{propO}{Proposition}

\newenvironment{bullets} {\vspace{-9pt}\begin{itemize}\itemsep0pt} {\end{itemize}\vspace{-9pt}}
\newenvironment{bulletnums} {\vspace{-9pt}\begin{enumerate}\itemsep0pt} {\end{enumerate}\vspace{-9pt}}

\newcommand{\ba}{\boldsymbol{a}}
\newcommand{\bv}{\boldsymbol{v}}
\newcommand{\ceil}[1]{\left\lceil #1 \right\rceil}
\newcommand{\eq}{\mathchoice{\;=\;}{=}{=}{=}}
\newcommand{\Erdos}{Erd\H{o}s}
\newcommand{\geqs}{\geqslant}
\newcommand{\leqs}{\leqslant}
\newcommand{\limsupinfty}[1][n]{\limsup\limits_{#1\rightarrow\infty}}
\newcommand{\pho}{\phantom{0}}
\newcommand{\tel}{\textellipsis}
\newcommand{\vdc}{Van der Corput} 
\newcommand{\vphi}{\varphi}

\newcommand{\myTitle}{On balancing consecutive slices of cake}
\title{\textbf{\myTitle}}

\author{$\phantom{{}^\dagger}$David Bevan${}^\dagger$}

\hypersetup{
pdftitle={\myTitle},
pdfauthor={David Bevan},
pdfstartview={XYZ null null 1.00}
}

\date{}

\begin{document}
\maketitle

{\begin{NoHyper}
\let\thefootnote\relax\footnotetext
{${}^\dagger$The University of Strathclyde, Glasgow, Scotland; email: dibevan@hotmail.co.uk.}
\end{NoHyper}}

{\begin{NoHyper}
\let\thefootnote\relax\footnotetext
{2020 Mathematics Subject Classification:
52C10. 
}
\end{NoHyper}}

\begin{abstract}
\noindent
Let $\ba=(a_i)_{i=1}^\infty$ be an infinite sequence of points on a circle. 
The first $n$ of these points cuts the circle into $n$ pieces.
For any given~$r$, let $\mu^r_n(\ba)$ be the ratio between the maximum and minimum sizes of $r$ consecutive pieces.
Addressing a question of De~Bruijn and \Erdos{}, we define a family of sequences for which the asymptotic least upper bound of this ratio,
\[
\mu_r(\ba) \eq \limsupinfty\mu^r_n(\ba) ,
\]
can easily be calculated. Hence, for small $r$, we present upper bounds on $\inf\mu_r(\ba)$. 

\end{abstract}

\section{Introduction}

Suppose $\ba=(a_i)_{i=1}^\infty$ is an infinite sequence of points on a circle, or equivalently an infinite sequence of radial cuts of a circular cake.
The first $n$ of these cuts divides the cake into $n$ pieces, whose sizes are proportional to the angular separation between adjacent cuts.
For fixed~$r$, let us call $r$ consecutive pieces a \emph{portion},
and let $\mu^r_n(\ba)$ be the ratio between the maximum and minimum sizes of a portion.
Since this is the only parameter of interest, we simply refer to it as \emph{the ratio} for a given value of $r$.

Now let
\[\mu_r(\ba)
\eq\limsupinfty\mu^r_n(\ba) 
\]
denote the asymptotic least upper bound of the ratios.
Finally, let $\mu_r\eq\inf\mu_r(\ba)$, where the infimum is taken over all possible sequences of cuts.
Thus $\mu_r$ represents the asymptotically best possible balancing of the sizes of portions of $r$ consecutive pieces.

De~Bruijn and \Erdos{}~\cite{DeBE1949} prove that $\mu_1=2$ and that, for each $r$, we have $\mu_r\geqs1+1/r$.
They also conjecture that $r(\mu_r-1)$ is unbounded.

There has been notable recent interest in this question.
Firstly, Korsky~\cite{Korsky2026} improves the lower bound to $\mu_r\geqs1+r/(r^2-1)$.
He also conjectures that, in fact, $\mu_2=2$.
In a subsequent paper~\cite{BevanCakeSlicingTwo}, we disprove this conjecture: If $\rho\approx0.75488$ is the real root of $\rho^2+\rho^3=1$, then $\mu_2\leqs1+\rho$.

Secondly, Cl\'ement and Steinerberger~\cite{CS2025} establish, by analysis of the binary \vdc{} sequence, 
that there is an absolute constant~$c$ such that $\mu_r\leqs1+c\log r/r$ for each~$r>1$,
proving a conjecture of Brethouwer~\cite{Brethouwer2024}.
It is perhaps worth noting that it may be easier to establish the weaker conjecture that $r(\mu_r(\ba)-1)$ is unbounded for every sequence $\ba$.


Our goal is to establish upper bounds on $\mu_r$ for small values of~$r$. 
Our results are shown in Table~\ref{tblBounds} along with the upper bound for $\mu_2$ from~\cite{BevanCakeSlicingTwo} and Korsky's lower bounds for comparison, and in Table~\ref{tblResults} with details of the sequences of cuts that establish the bounds.

\begin{table}[ht]
\small
\[
\renewcommand*{\arraystretch}{1.2}
\begin{array}{c||c|c|c|c|c|c|c|c|c|}
r&2&3&4&5&6&7&8&9&10 \\\hline
\multirow{2}{*}{$\mu_r\leqs$}& 1+\rho & {3/2} & {3/2} & {7/5} & {4/3} & {5/4} & {5/4} & {11/9} & {6/5} \\
& 1.75488 & 1.5 & 1.5 & 1.4 & 1.33333 & 1.25 & 1.25 & 1.22222 & 1.2 \\\hline
\multirow{2}{*}{$\mu_r\geqs$}& {5/3} & {11/8} & {19/15} & {29/24} & {41/35} & {55/48} & {71/63} & {89/80} & {109/99} \\
& 1.66667 & 1.375 & 1.26667 & 1.20833 & 1.17143 & 1.14583 & 1.12698 & 1.1125 & 1.10101
\end{array}
\]
  \caption{Bounds on $\mu_r$ for small $r$}\label{tblBounds}
\end{table}

To establish our upper bounds, in Section~\ref{sectCutting} we define a family of ``periodic'' sequences of cuts.
In these sequences, following some initial cuts, the pieces always have at most two distinct sizes, the larger pieces being exactly twice the size of smaller ones.
At each step, we simply bisect a large piece, creating an adjacent pair of small pieces.
After bisecting every large piece, each piece has the same size, which we now consider to be the new size of a large piece.
These properties are also satisfied by the (binary) \vdc{} sequence, 
and we show in Section~\ref{sectVDC} that, for any given~$r$, the \vdc{} sequence is 
essentially the same as certain sequences in our family.

\section{Periodic cake cutting}\label{sectCutting}

Each of our cutting sequences is defined by a permutation $\sigma=\sigma_1\sigma_2\ldots\sigma_p$ of 
$\{1,\ldots,p\}$, 
for some \emph{period length}~$p$.
We call $\sigma$ the \emph{recipe} for the sequence.

Formally, the cutting process is as follows:

\begin{bulletnums}
\raggedright
  \item Initially, cut the cake into $p$ equally-sized pieces. Let $m=1$. 
  
  The variable $m$ will increase in value as the process continues.
  
  \item We consider the $m p$ pieces to consist of $m$ \emph{periods}, each consisting of $p$ consecutive pieces. 
  
  By suitable scaling, we may assume that each piece is a \emph{large} piece of size~2.
  
  In turn, for each $i=1,\ldots,p$: \\
  \quad In turn, for each $j=1,\ldots,m$: \\
  \quad\quad Bisect the $\sigma_i$-th of the (original) large pieces in the $j$-th period, creating an adjacent \\\quad\quad pair of \emph{small} pieces, each of size~1.
  
 \begin{bullets} 
  \item Note that after each $m$ cuts (bisecting the ``same'' large piece in each of the $m$ periods), the sizes of the pieces are periodic with period~$p$.
  \end{bullets}

  \item After bisecting every large piece, we have $2m p$ equally-sized pieces. We now repeat step~2 with $m$ doubled in value.
  
  This repeats \emph{ad infinitum}, with $m$ successively equal to each power of two.
\end{bulletnums}

\newcounter{x}
\newcommand{\splits}[4]  
{
  \footnotesize  
  \setcounter{x}{0} 
  \foreach \d in {#1}
  {
    \ifnum1=\d
    {\draw[fill=yellow!36] (\thex,-.5) rectangle (\thex+2,1.5);}
    \else
    {\draw[fill=yellow!12] (\thex,-.5) rectangle (\thex+4,1.5);}
    \fi
    \addtocounter{x}{\d}
    \node at (\thex,.5) {\d};
    \addtocounter{x}{\d}
  }
  \node at (60,.5) {#2\,:\,#3};
  \node at (-2,.5) {#4};
  \draw[very thick] (0,-.5)--(0,1.5);
  \draw[very thick] (28,-.5)--(28,1.5);
  \draw[very thick] (56,-.5)--(56,1.5);
}

\begin{figure}[t]
  \newcommand{\phoc}{\phantom{0:}}
  \centering
\begin{tikzpicture}[scale=.19]
\footnotesize
 \node at (-2,0) {\phoc};
 \node at (2,0) {1}; 
 \node at (6,0) {2}; 
 \node at (10,0) {3}; 
 \node at (14,0) {4}; 
 \node at (18,0) {5}; 
 \node at (22,0) {6}; 
 \node at (26,0) {7}; 
 \node at (30,0) {1}; 
 \node at (34,0) {2}; 
 \node at (38,0) {3}; 
 \node at (42,0) {4}; 
 \node at (46,0) {5}; 
 \node at (50,0) {6}; 
 \node at (54,0) {7}; 
 \node at (60,0) {\phantom{00\,:\,00}};
\end{tikzpicture}\\[2pt]  
  
\begin{tikzpicture}[scale=.19]\splits{2,2,2,2,2,2,2,2,2,2,2,2,2,2}{10}{10}{\phoc}\end{tikzpicture}\\[2pt]

\begin{tikzpicture}[scale=.19]\splits{1,1,2,2,2,2,2,2,2,2,2,2,2,2,2}{10}{8\pho}{1:}\end{tikzpicture}\\[2pt]

\begin{tikzpicture}[scale=.19]\splits{1,1,2,2,2,2,2,2,1,1,2,2,2,2,2,2}{10}{8\pho}{\phoc}\end{tikzpicture}\\[2pt]

\begin{tikzpicture}[scale=.19]\splits{1,1,2,2,1,1,2,2,2,1,1,2,2,2,2,2,2}{10}{7\pho}{4:}\end{tikzpicture}\\[2pt]

\begin{tikzpicture}[scale=.19]\splits{1,1,2,2,1,1,2,2,2,1,1,2,2,1,1,2,2,2}{\pho8}{7\pho}{\phoc}\end{tikzpicture}\\[2pt]

\begin{tikzpicture}[scale=.19]\splits{1,1,1,1,2,1,1,2,2,2,1,1,2,2,1,1,2,2,2}{\pho8}{6\pho}{2:}\end{tikzpicture}\\[2pt]

\begin{tikzpicture}[scale=.19]\splits{1,1,1,1,2,1,1,2,2,2,1,1,1,1,2,1,1,2,2,2}{\pho8}{6\pho}{\phoc}\end{tikzpicture}\\[2pt]

\begin{tikzpicture}[scale=.19]\splits{1,1,1,1,2,1,1,1,1,2,2,1,1,1,1,2,1,1,2,2,2}{\pho8}{6\pho}{5:}\end{tikzpicture}\\[2pt]

\begin{tikzpicture}[scale=.19]\splits{1,1,1,1,2,1,1,1,1,2,2,1,1,1,1,2,1,1,1,1,2,2}{\pho7}{6\pho}{\phoc}\end{tikzpicture}\\[2pt]

\begin{tikzpicture}[scale=.19]\splits{1,1,1,1,1,1,1,1,1,1,2,2,1,1,1,1,2,1,1,1,1,2,2}{\pho7}{5\pho}{3:}\end{tikzpicture}\\[2pt]

\begin{tikzpicture}[scale=.19]\splits{1,1,1,1,1,1,1,1,1,1,2,2,1,1,1,1,1,1,1,1,1,1,2,2}{\pho7}{5\pho}{\phoc}\end{tikzpicture}\\[2pt]

\begin{tikzpicture}[scale=.19]\splits{1,1,1,1,1,1,1,1,1,1,1,1,2,1,1,1,1,1,1,1,1,1,1,2,2}{\pho7}{5\pho}{6:}\end{tikzpicture}\\[2pt]

\begin{tikzpicture}[scale=.19]\splits{1,1,1,1,1,1,1,1,1,1,1,1,2,1,1,1,1,1,1,1,1,1,1,1,1,2}{\pho6}{5\pho}{\phoc}\end{tikzpicture}\\[2pt]

\begin{tikzpicture}[scale=.19]\splits{1,1,1,1,1,1,1,1,1,1,1,1,1,1,1,1,1,1,1,1,1,1,1,1,1,1,2}{\pho6}{5\pho}{7:}\end{tikzpicture}\\[2pt]

\begin{tikzpicture}[scale=.19]\splits{1,1,1,1,1,1,1,1,1,1,1,1,1,1,1,1,1,1,1,1,1,1,1,1,1,1,1,1}{\pho5}{5\pho}{\phoc}\end{tikzpicture}  
  \caption{Cake cutting for recipe $\sigma=1425367$ with two periods. 
  The ratios for portions of five pieces are shown at the right: 
  $\mu_5^\sigma=10/7$ is the greatest of these.}\label{figExample}
\end{figure}

Note that the specific order of the initial $p$ cuts is immaterial to our analysis, since reordering a finite prefix of $\ba$ has no effect on~$\mu_r(\ba)$.
Given a value of $r$ and a recipe $\sigma$, we use $\mu_r^\sigma$ to denote the value of $\mu_r(\ba_\sigma)$ where $\ba_\sigma$ is the sequence of cuts defined by~$\sigma$.

For any given value of $m$ (not necessarily a power of two), we refer to the $mp$ cuts defined by Step 2 as the \emph{phase with $m$ periods}.
See Figure~\ref{figExample} for an illustration (shown linearly) of the phase with two periods for recipe 1425367.

It turns out that we can determine $\mu_r^\sigma$ just by analysing a single phase, as long as it has sufficiently many periods.

\begin{propO}\label{propLen}
  Suppose $\sigma$ 
  is a recipe of 
  length $p$
  and $r\geqs1$.
  If $m\geqs2\ceil{r/p}$,
  then $\mu^\sigma_r$ equals the greatest ratio that occurs during the phase with $m$ periods.
\end{propO}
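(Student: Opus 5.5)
The plan is to show that every phase with $m'\geqs m$ periods produces exactly the same multiset of ratio values as the phase with $m$ periods, where $m\geqs2\ceil{r/p}$. For each state we will record the ratio as the first $n$ cuts are made. Granting this, the ratios occurring from the phase with $m$ periods onwards (including the states between phases, where all pieces are equal and the ratio is $1$) take only finitely many values, and each of them recurs in every later phase. Hence the $\limsup$ over $n$ equals the maximum ratio in the phase with $m$ periods.

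First we describe a state within a phase. Suppose we are at the moment when, in the phase with $m'$ periods, the bisections for $\sigma_1,\dots,\sigma_{i-1}$ have been done in all periods, and the bisection for $\sigma_i$ has been done in periods $1,\dots,j$. Then the cake is a cyclic word of periods. Each period is one of two fixed length-$p$ patterns: the pattern $A_i$, in which the large pieces $\sigma_1,\dots,\sigma_i$ are bisected, or the pattern $B_i$, in which only $\sigma_1,\dots,\sigma_{i-1}$ are. The cake reads $A_i^{\,j}B_i^{\,m'-j}$ cyclically.

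A portion consists of $r$ consecutive pieces. Each period contains at least $p$ pieces, so a portion touches at most $\ceil{r/p}+1$ consecutive periods. Each portion therefore lies in a window of $k=\ceil{r/p}+1$ consecutive periods. Its size is determined by the starting piece and by the $A/B$ labels of the periods in that window. The possible label patterns in a window of the cyclic word $A^jB^{m'-j}$ are: all $A$; all $B$; $A^sB^{k-s}$; $B^sA^{k-s}$; and, when $m'$ is small, words that contain both boundaries. The maximum and minimum portion sizes are therefore the max and min over the set of window patterns that occur.

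The key step is to compare the phases with $m$ and $m'$ periods. We will argue that, for $m'\geqs m\geqs2\ceil{r/p}$, the set of window patterns realised over the whole phase, taken as $j$ runs from $0$ to $m'$ for each $i$, is the same. Every realisable pattern is either constant, or contains exactly one boundary (one of $A^sB^{k-s}$ or $B^sA^{k-s}$), or contains both boundaries. Windows containing both boundaries read $B^aA^jB^b$ or $A^aB^{m'-j}A^b$, and they occur for $j\leqs k-2$ or $m'-j\leqs k-2$. Those same short blocks occur for every $m'\geqs m$, because $m\geqs 2(k-1)$ leaves room for the complementary block. Moreover, a surrounding run of length at least $k-1$ is present on both sides whenever it is needed.

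Here the main obstacle is the exact inequality. We need the ratio at each state of phase $m'$ to equal the ratio at some state of phase $m$, and conversely. We will map $j\mapsto j$ when $j\leqs m/2$ and $j\mapsto m-(m'-j)$ otherwise. We must then check that the multiset of realised patterns, not merely the patterns within individual windows, coincides; this is where the threshold $2\ceil{r/p}$ rather than $2(k-1)$ is used. A state with $j$ and $m'-j$ both at least $k-1$ realises every one-boundary pattern and both constant patterns. One with a short block realises the corresponding two-boundary patterns together with the constant pattern of the long block. Both kinds of state are reproduced in phase $m$.

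Finally, there is the correction for the starting piece and for the trivial equal-pieces states. It is settled by the fact that the portion size depends only on the offset modulo $p$ together with the window's labels.
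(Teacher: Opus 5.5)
Your plan is the paper's: show that every phase with $m\geqs2\lceil r/p\rceil$ periods yields the same set of ratios, by classifying states according to which local label patterns occur. However, the step that should produce the threshold fails in your set-up. You work with windows of $k=\lceil r/p\rceil+1$ \emph{touched} periods, and with that granularity which length-$k$ factors of $A_i^{\,j}B_i^{\,m'-j}$ occur depends on $\min(j,k)$ and $\min(m'-j,k)$, not on $\min(j,k-1)$ and $\min(m'-j,k-1)$. For instance, when $j=k-1$ the factor $A^k$ does not occur. So your claim that a state with $j,\,m'-j\geqs k-1$ realises both constant patterns is false. In your framework, a phase with $m'\geqs2k$ has states with $j,\,m'-j\geqs k$ that no state of the phase with $m=2(k-1)$ periods matches. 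Conversely, that phase's state $j=k-1$ has no counterpart in the later phases. As written, your argument therefore only closes for $m\geqs2\lceil r/p\rceil+2$. Your remark that the threshold $2\lceil r/p\rceil$ is used `rather than $2(k-1)$' is vacuous, since the two are equal.

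The missing ingredient is the paper's key observation. $A_i$ and $B_i$ differ only at the $\sigma_i$-th original piece, and a portion meets that piece (old piece or new pair) in at most $q=\lceil r/p\rceil$ periods. This is because consecutive such sites are separated by $\ell-1$ other pieces with $\ell\geqs p$, so meeting $t$ of them takes at least $(t-1)\ell+1\leqs r$ pieces. Hence a portion's size depends only on its position and on at most $q$ consecutive labels. The set of portion sizes at state $(i,j)$ is then determined by $(i,\min(j,q),\min(m-j,q))$. When $m\geqs2q$, exactly the pairs $(J,q)$ and $(q,K)$ with $0\leqs J,K\leqs q$ occur, independently of $m$.

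Two smaller repairs are also needed. First, the ratios coincide as a \emph{set}, not a multiset, since phases of different lengths have different numbers of states. Second, your map $j\mapsto m-(m'-j)$ for $j>m/2$ can be negative, or can send a state with both blocks long to one with a short block (e.g.\ $m=2q$, $m'=4q$, $j=2q\mapsto0$). You need a middle case that sends every state with $j,\,m'-j\geqs q$ to $j=q$.
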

\begin{proof}
It is sufficient to prove that any phase with at least $2\ceil{r/p}$ periods yields the same set of ratios.

As noted above, after each multiple of $m$ cuts all the periods are identical. 
If each period consists of $\ell$ pieces ($p\leqs\ell\leqs2p$), then there are at most $\ell$ distinct portions. 
These are the same for any value of~$m$.

More generally, if the number of cuts made so far is congruent to $k$ modulo $m$ then the first $k$ (\emph{early}) periods are identical to each other, and the remaining $m-k$ (\emph{late}) periods are also identical to each other.
Moreover, an early period differs from a late period just by having one large piece replaced by an adjacent pair of small pieces, which we call a \emph{new pair}.
Without loss of generality, we may assume that new pairs occur at the beginning of the periods.

Now a period consists of at least $p$ pieces. 
So a portion can contain pieces from at most $q\eq\ceil{r/p}$ new pairs, even if there are more than $q$ early periods.

Suppose there are $k\leqs q$ early periods, each consisting of $\ell+1$ pieces.
Then there are at most $k(\ell+1)+r$ distinct portions, as illustrated in Figure~\ref{figProof}. (Note that the each blue part of a period is the same.)

\begin{figure}[ht]
  \centering
\newcommand{\yheight}{3}
\small
\begin{tikzpicture}[scale=.24]
  \foreach \x in {-4,...,-1}
    {
      \draw[fill=yellow!12] (5*\x,0) rectangle (5*\x+1,\yheight);
      \node at (5*\x+.55,\yheight/2) {${}_{{}^2}$};
      \draw[fill=blue!6] (5*\x+1,0) rectangle (5*\x+5,\yheight);
      \draw[thick] (5*\x,0)--(5*\x,\yheight+1.5);
      \node at (5*\x+2.5,\yheight+1.25) {$\ell$};
    }
    \node at (5*-2.5+2.5,\yheight+3) {\emph{late periods}};
  \foreach \x in {0,...,2}
    {
      \draw[fill=yellow!36] (6*\x,0) rectangle (6*\x+1,\yheight);
      \node at (6*\x+.55,\yheight/2) {${}_{{}^1}$};
      \draw[fill=yellow!36] (6*\x+1,0) rectangle (6*\x+2,\yheight);
      \node at (6*\x+1+.55,\yheight/2) {${}_{{}^1}$};
      \draw[fill=blue!6] (6*\x+2,0) rectangle (6*\x+6,\yheight);
      \draw[thick] (6*\x,0)--(6*\x,\yheight+1.5);
      \node at (6*\x+3,\yheight+1.25) {$\ell+1$};
    }
  \draw[very thick] (0,0)--(0,\yheight+3);
  \node at (6*1+3,\yheight+3) {\emph{$k$ early periods}};
  \foreach \x in {3,...,6}
    {
      \draw[fill=yellow!12] (5*\x+3,0) rectangle (5*\x+4,\yheight);
      \node at (5*\x+3+.55,\yheight/2) {${}_{{}^2}$};
      \draw[fill=blue!6] (5*\x+4,0) rectangle (5*\x+8,\yheight);
      \draw[thick] (5*\x+3,0)--(5*\x+3,\yheight+1.5);
      \node at (5*\x+5.5,\yheight+1.25) {$\ell$};
    }
    \draw[thick] (18,0)--(18,\yheight+3);
    \node at (5*4.5+3+2.5,\yheight+3) {\emph{late periods}};
    \draw[ultra thick] (1-19,-1)--(1,-1);
    \node at (1-9.5,-2.25) {portion ($r$ pieces)};
    \node at (9.5,-1.25) {\textbf{\tel}};
    \draw[ultra thick] (18,-1)--(18+19,-1);
    \node at (18+9.5,-2.25) {portion ($r$ pieces)};
\end{tikzpicture}
  \caption{The leftmost and rightmost positions, with respect to the early periods, of the $k(\ell+1)+r$ possibly distinct portions; here $k=3$, $\ell=5$ and $r=19$.}\label{figProof}
\end{figure}

To ensure that all of these can be instantiated, it is sufficient that there are at least $\ceil{r/\ell}\leqs q$ late periods.
Hence $m\geqs2q$ suffices to capture all the possible ratios when there are at most $q$ early periods.

The creation of more than $q$ early periods does not result in any new possibilities for the structure of a portion, since a portion can only contain pieces from $q$ new pairs.
Thus every possible ratio occurs when there are $q$ or fewer early periods.
The result follows.
\end{proof}

Therefore, if $\sigma$ has length $p$, then to calculate $\mu_r^\sigma$ we only need to determine the greatest ratio that occurs during a phase with $2\ceil{r/p}$ periods.
In particular, if $p\geqs r$, then only two periods are required.
For example, $\mu_5^{1425367}=10/7$, as illustrated in Figure~\ref{figExample}.

Figure~\ref{figCode} contains some simple \emph{Mathematica} code for calculating $\mu_r^\sigma$.
(This could easily be optimised in a variety of ways.)

\begin{figure}[ht]
\small
\hrule
\vspace{6pt}
\setstretch{1.1}
\begin{verbatim}
initLength[r_, p_] := 2 Ceiling[r/p] p
splitPiece[a_, k_] := ReplacePart[a, k -> {1, 1}]
expandRecipe[s_, len_] := Join @@ (Range[#, len, Length@s]& /@ s)
maxMinRatio[r_][a_] := Module[{sizes = Total /@ Partition[Flatten@a, r, 1, {1, 1}]}, 
  Max@sizes / Min@sizes]
largestRatio[r_, s_] := Module[{len = initLength[r, Length@s]},
  Max[maxMinRatio[r] /@ 
    FoldList[splitPiece, ConstantArray[2, len], expandRecipe[s, len]]]]
\end{verbatim}
\vspace{-6pt}
\hrule
\vspace{6pt}
\caption{\emph{Mathematica} code: \texttt{\small largestRatio[}$r$\texttt{\small,}$\sigma$\texttt{\small ]} gives $\mu_r^\sigma$.}\label{figCode}
\end{figure}

With rather more sophisticated code, it is possible to search effectively for recipes that yield small ratios.
Table~\ref{tblResults} lists results for $r\leqs20$ with periods of length at most~24.
For each $r$, the smallest value of some $\mu_r^\sigma$ found is given, along with the shortest period and lexicographically earliest recipe that yields this value.
It seems likely that some of these might be improved by using longer periods.

\begin{table}[t]
  \centering\small
  \renewcommand*{\arraystretch}{1.225}
  \begin{tabular}{c|l|c|l}
    $r$ & \pho\hfill$\mu_r^\sigma$\hfill\pho     & \!\emph{period}\!    & \emph{recipe $\sigma$} \\\hline
     2  & $\pho2/1\pho = 2$             & \pho1     & 1 \\
     3  & $\pho3/2\pho = 1.5$           & \pho2     & 1\,2 \\
     4  & $\pho3/2\pho = 1.5$           & \pho3     & 1\,2\,3 \\
     5  & $\pho7/5\pho = 1.4$           & \pho4     & 1\,2\,3\,4 \\
     6  & $\pho4/3\pho \approx 1.33333$ & \pho4     & 1\,2\,3\,4 \\
     7  & $\pho5/4\pho = 1.25$          &    12     & 1\,7\,3\,9\,5\,11\,2\,6\,10\,4\,8\,12 \\
     8  & $\pho5/4\pho = 1.25$          & \pho6     & 1\,2\,3\,5\,4\,6 \\
     9  & $11/9\pho \approx 1.22222$    & \pho7     & 1\,2\,4\,6\,3\,5\,7 \\
    10  & $\pho6/5\pho = 1.2$           & \pho8     & 1\,3\,5\,7\,2\,6\,4\,8 \\
    11  & $\pho6/5\pho = 1.2$           &    20     & 1\,11\,3\,13\,5\,16\,8\,18\,7\,14\,20\,9\,2\,6\,12\,17\,4\,10\,15\,19 \\
    12  & $19/16 = 1.1875$              &    20     & 1\,11\,3\,13\,5\,9\,17\,7\,15\,19\,2\,8\,14\,4\,10\,16\,6\,12\,18\,20 \\
    13  & $20/17 \approx 1.17647$       &    22     & 1\,12\,3\,14\,5\,16\,7\,10\,19\,20\,4\,11\,18\,8\,21\,2\,9\,15\,6\,13\,17\,22 \\
    14  & $\pho7/6\pho \approx 1.16667$ &    23     & 1\,3\,13\,15\,5\,17\,7\,10\,20\,9\,18\,22\,2\,11\,4\,12\,19\,16\,6\,8\,14\,21\,23 \\ 
    15  & $15/13 \approx 1.15385$       &    24     & 1\,11\,13\,23\,3\,15\,5\,9\,19\,7\,17\,21\,2\,10\,18\,4\,12\,20\,6\,14\,22\,8\,16\,24 \\
    16  & $\pho8/7\pho \approx 1.14286$ &    21     & 1\,7\,14\,10\,19\,4\,17\,8\,11\,20\,2\,13\,5\,16\,3\,12\,18\,6\,9\,15\,21 \\
    17  & $\pho8/7\pho \approx 1.14286$ &    19     & 1\,5\,10\,14\,7\,17\,3\,12\,8\,15\,18\,2\,11\,4\,6\,13\,9\,16\,19 \\
    18  & $25/22 \approx 1.13636$       &    20     & 1\,5\,11\,15\,8\,18\,3\,13\,6\,16\,9\,19\,2\,12\,4\,14\,7\,10\,17\,20 \\
    19  & $26/23 \approx 1.13043$	    &    22     & 1\,6\,12\,17\,9\,20\,3\,14\,2\,13\,4\,15\,5\,16\,7\,18\,8\,11\,21\,10\,19\,22 \\
    20  & $\pho9/8\pho = 1.125$         &    24     & 1\,6\,13\,18\,9\,21\,3\,15\,10\,22\,2\,14\,4\,16\,5\,17\,7\,19\,11\,23\,8\,12\,20\,24
  \end{tabular}
  \caption{Upper bounds on $\mu_r$ for small $r$}\label{tblResults}
\end{table}

So far, the value of $\mu_r$ is only known for $r=1$.
Lower and upper bounds for $r\geqs2$ are given in Table~\ref{tblBounds}. 
Some natural questions include:
\begin{bullets}
  \item For which (if any) $r$ are these upper bounds tight? 
  \item In particular, what is the exact value of $\mu_2$? And does either $\mu_3=\frac32$ or $\mu_4=\frac32$?
  \item Are there other natural sequences of cuts that improve on these bounds?
  \item Does $\mu_r$ weakly decrease with $r$?
\end{bullets}

\section{The \vdc{} sequence}\label{sectVDC}

The (binary) \vdc{} sequence~\cite{Corput1935} is an infinite sequence $\bv=(v_i)_{i=1}^\infty$ of values in the unit interval, where $v_i$ is determined from the reverse of the binary expansion of $i-1$.
Here are the first eight values:
\[
0.0_2\!=\!0, \;\;\;
0.1_2\!=\!\tfrac12, \;\;\;
0.01_2\!=\!\tfrac14, \;\;\;
0.11_2\!=\!\tfrac34, \;\;\;
0.001_2\!=\!\tfrac18, \;\;\;
0.101_2\!=\!\tfrac58, \;\;\;
0.011_2\!=\!\tfrac38, \;\;\;
0.111_2\!=\!\tfrac78  . 
\]
If we consider our cake to have unit size, then the \vdc{} sequence specifies a sequence of radial cuts.
We claim that, for each $r$, we have $\mu_r(\bv)=\mu_r^\sigma$ for some recipe $\sigma$ that depends on~$r$.

For each $h\geqs0$, let $\vphi^h=\vphi^h_1,\ldots,\vphi^h_{2^h}$ be the recipe of length $2^h$ defined from the first $2^h$ terms in the \vdc{} sequence by $\vphi^h_i=2^hv_i+1$. 
For example, we have $\vphi^3=15372648$.
Then we have the following identity:

\begin{propO}\label{propVdC}
  For each $r\geqs1$ and $h\geqs\log_2r$, we have $\mu_r(\bv)=\mu_r^{\vphi^h}$.
\end{propO}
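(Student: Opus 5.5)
The plan is to show that, after a finite prefix, the \vdc{} sequence of cuts coincides with the sequence $\ba_{\vphi^h}$ defined by the recipe $\vphi^h$, up to rotation and scaling. Since reordering a finite prefix does not affect $\mu_r$, this gives $\mu_r(\bv)=\mu_r^{\vphi^h}$ for every $h\geqs0$. The hypothesis $h\geqs\log_2 r$ would then not be needed for the identity itself. It does, however, ensure that $p=2^h\geqs r$, so by Proposition~\ref{propLen} only two periods are required to compute the ratio. I would still check carefully whether the identity holds for all $h$, since the hypothesis may be there for exactly this reason.

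The key structural fact is a self-similarity of the \vdc{} sequence. For $i=1,\ldots,2^{h+k}$, write $i-1=a+2^h b$ with $0\leqs a<2^h$ and $0\leqs b<2^k$. Reversing binary digits gives
\[
v_i \eq v_{a+1} + 2^{-h}\,\tilde v_{b+1},
\]
where $\tilde v$ denotes the \vdc{} value computed with $k$ bits, namely $v_{b+1}$ itself. Now take the block of cuts with indices $i\in(2^{h+k},2^{h+k+1}]$. These are the cuts that bisect all $2^{h+k}$ equal pieces of size $2^{-(h+k)}$. Writing $i-1=2^{h+k}+a+2^h b$, the new cut lies at $v_{a+1}+2^{-h}v_{b+1}+2^{-(h+k+1)}$. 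In the language of Step~2, take $m=2^k$ and view the cake as $m$ periods, each consisting of $p=2^h$ consecutive pieces, with period $j$ covering $[(j-1)/m,\,j/m)$.

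The main step, which I expect to be the principal obstacle, is matching the indexing. With $k$ bits fixed, $a$ varies fastest, so consecutive cuts run through $a$ first. That is the wrong order: the recipe requires that for each $i$ we cut the $\sigma_i$-th piece in every one of the $m$ periods. I would therefore swap the roles and write $i-1=2^{h+k}+b+2^k a$, with $0\leqs b<2^k$ varying fastest. Reversal then gives
\[
v_i \eq v_{b+1}\cdot\text{(scaled to }k\text{ bits)} + 2^{-k}v_{a+1}\cdot\text{(}h\text{ bits)} + 2^{-(h+k+1)}.
\]
Concretely, for fixed $a$ the cuts run over every period $j$, since $v_{b+1}$ with $k$ bits enumerates all multiples of $1/m$. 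Within each period they are placed at relative position $v_{a+1}$ with $h$ bits, that is, in the $(2^h v_{a+1}+1)=\vphi^h_{a+1}$-th piece of that period, at its midpoint. So the cuts in this block are grouped by $a=0,\ldots,2^h-1$, and each group bisects piece $\vphi^h_{a+1}$ in all $m$ periods. The order of cuts within a group follows the \vdc{} order over the periods rather than $j=1,\ldots,m$.

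It remains to handle this discrepancy, and I would argue that the within-group order is irrelevant. Proposition~\ref{propLen}'s proof uses only the fact that partway through a group, some periods have the new pair and others do not. In the \vdc{} ordering, however, the early periods are not contiguous, so the argument needs adjusting. I would adapt it as follows. Any portion meets at most $\ceil{r/p}=1$ period boundary-adjacent new pair when $p\geqs r$; more precisely, it touches at most two consecutive periods, so it contains at most one new-pair location. Hence the ratio at any moment depends only on which local patterns occur, namely whether a period has or lacks its new pair next to neighbours that have or lack theirs. Here $h\geqs\log_2 r$ is used. In the \vdc{} ordering, once at least one period is cut and at least one is not, all adjacent (early, late) combinations that matter already appear: the order bisects period $1$, then period $m/2+1$, and so on, so cut and uncut periods are adjacent in both orientations. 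These patterns are exactly those that arise in the phase with two periods. The set of ratios attained in each group therefore equals the set for the phase with $2$ periods, which by Proposition~\ref{propLen} gives $\mu_r^{\vphi^h}$. Taking the limsup over blocks, with $k\to\infty$, completes the proof.
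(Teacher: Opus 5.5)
Your final argument is correct and is essentially the paper's proof. You identify phase $h+k$ of $\bv$ as $2^h$ groups, each bisecting piece $\vphi^h_{a+1}$ of every period, with the periods visited in \vdc{} order. You then use $r\leqs p$ to see that a portion meets at most one old piece or new pair. Hence the portions present with $k$ early and $m-k$ late periods are exactly those of the two-period $\vphi^h$ phase, to which Proposition~\ref{propLen} applies. Your digit-reversal computation is a nice explicit version of what the paper only calls the ``recursive structure'' of $\bv$.

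Two corrections, however.

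First, your opening claim is false: $\bv$ does not agree with $\ba_{\vphi^h}$ after a finite prefix, so the identity does not hold for every $h\geqs0$. As you yourself discover, the periods within a group are visited in a different order, and the hypothesis $h\geqs\log_2 r$ is genuinely needed. For example, $\vphi^0=1$ bisects the pieces in order around the cake, producing long runs of small and of large pieces, so $\mu_r^{\vphi^0}=2$ for every $r$, whereas $\mu_3(\bv)=3/2$.

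Second, the step ``touches at most two consecutive periods, so it contains at most one new-pair location'' needs its actual reason. Meeting the locations in two adjacent periods requires at least $p+1>r$ pieces. You should also say explicitly that this fact is what disposes of portions straddling two early or two late periods, since such adjacencies do not occur in the two-period phase. Finally, both early--late orientations occur simply because the arrangement is cyclic, not because of the particular \vdc{} order.
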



\begin{proof}
  We begin by observing the similarities between the \vdc{} sequence $\bv$ and the sequence with recipe $\vphi^h$ (for some~$h$).
  
  Note first that, for any $j$, the first $2^j$ cuts of $\bv$ divide the cake into $2^j$ equally-sized (\emph{large}) pieces.
  Moreover, the subsequent $2^j$ cuts (which we'll refer to as \emph{phase $j$} of $\bv$) each bisect one of these pieces (the \emph{old piece}), creating an adjacent \emph{new pair} of \emph{small} pieces.
  Furthermore, for any $h<j$, if we consider the $2^j$ pieces to consist of $m=2^{j-h}$ periods (each consisting of $p=2^h$ consecutive pieces), then after each multiple of $m$ cuts, all the periods are identical.
  
  More generally, if the number of cuts made so far in the phase is congruent to $k$ modulo $m$ then there are $k$ \emph{early} periods that are identical to each other, and $m-k$ \emph{late} periods which are also identical to each other, with an early period differing from a late period just by having one large piece replaced by a new pair.
  
  Now, due to the recursive structure of $\bv$, the order in which the large pieces in each period are bisected is given by $\vphi^h$.
  So phase~$j$ of the \vdc{} sequence (for any $j>h$) behaves in a similar manner to the sequence with recipe $\vphi^h$, but with the difference that the early periods are usually not consecutive.
  (Indeed, the ordering of the early periods is given by $\vphi^{j-h}$, so, for example, the first $m/2$ early periods are the odd-numbered periods.)
  
  Suppose now that $h\geqs\log_2r$, or equivalently that $p\geqs r$.
  By Proposition~\ref{propLen}, it is sufficient to demonstrate that the phase \emph{with two periods} for recipe $\vphi^h$ (which we call the \emph{$\vphi^h$ phase} for brevity) yields the same set of ratios (for any $j>h$) as phase~$j$ of $\bv$ (which we now refer to as the \emph{$\bv$ phase}).
  
  Firstly, as noted above, for any~$s$, after $sm$ cuts in the $\bv$ phase, all the periods are identical.
  Similarly, after $2s$ cuts in the $\vphi^h$ phase, both periods are the same, and are the same as those in the $\bv$ phase after $sm$ cuts.
  Thus we have the same set of portions in the $\vphi^h$ phase as in the $\bv$ phase.
  
  Now, after $sm+k$ cuts ($1\leqs k<m$) in the $\bv$ phase there are $k$ early periods and $m-k$ late periods.
  Similarly, after $2s+1$ cuts in the $\vphi^h$ phase there is one early period and one late period, matching the two distinct periods in the $\bv$ phase.
  We claim that we have the same set of portions in the two phases (for any choice of~$k$).
  
  Since $r\leqs p$, a portion can contain pieces from at most two (adjacent) periods.
  Moreover, a portion can contain only one old piece (in a late period), and can contain pieces from only one new pair (in an early period).
  It cannot contain both an old piece and also a piece from a new pair.
  
  There are thus only three possibilities for a portion:
  \begin{bulletnums}
    \item The portion doesn't contain an old piece or a piece from a new pair. 
    In this case, the same set of portions is available whether the pieces in such a portion come from early or late periods.
    \item The portion contains an old piece in a late period. 
    Since such a portion cannot contain another old piece or a piece from a new pair, 
    the same set of portions is available whether the periods adjacent to the late period are early or late.
    \item The portion contains one or two pieces from a new pair in an early period. 
    Since such a portion cannot contain an old piece or a piece from another new pair, 
    again the same set of portions is available whether the periods adjacent to the early period are early or late.
  \end{bulletnums}  
  
  The three possibilities are illustrated in Figure~\ref{figProofVdeC}, where without loss of generality it is assumed that old pieces and new pairs occur at the beginning of the periods. 
  (Note again that the each blue part of a period is the same.)
  
\begin{figure}[t]
  \centering
\newcommand{\yheight}{3}
\small
\begin{tikzpicture}[scale=.24]
    \draw[fill=blue!6] (-30,0) rectangle (-22,\yheight);
    \draw[ultra thick] (-30,-1)--(-22,-1);
    \draw[fill=blue!6] (-17,0) rectangle (-9,\yheight);
    \draw[fill=yellow!12] (-9,0) rectangle (-8,\yheight);
    \node at (-9+.55,\yheight/2) {${}_{{}^2}$};
    \draw[fill=blue!6] (-8,0) rectangle (0,\yheight);
    \draw[thick] (-9,0)--(-9,\yheight+2.3);
    \node at (-4.5,\yheight+1.5) {\emph{late period}};
    \draw[ultra thick] (-17,-1)--(-8,-1);
    \draw[ultra thick] (-14,-1.75)--(-5,-1.75);
    \draw[ultra thick] (-9,-2.5)--(0,-2.5);
    \draw[fill=blue!6] (5,0) rectangle (13,\yheight);
    \draw[fill=yellow!36] (13,0) rectangle (14,\yheight);
    \node at (13.55,\yheight/2) {${}_{{}^1}$};
    \draw[fill=yellow!36] (14,0) rectangle (15,\yheight);
    \node at (14+.55,\yheight/2) {${}_{{}^1}$};
    \draw[fill=blue!6] (15,0) rectangle (23,\yheight);
    \draw[thick] (13,0)--(13,\yheight+2.3);
    \node at (18,\yheight+1.5) {\emph{early period}};
    \draw[ultra thick] (5,-1)--(14,-1);
    \draw[ultra thick] (6,-1.75)--(15,-1.75);
    \draw[ultra thick] (9,-2.5)--(18,-2.5);
    \draw[ultra thick] (13,-3.25)--(22,-3.25);
    \draw[ultra thick] (14,-4)--(23,-4);
\end{tikzpicture}
  \caption{Some possible portions, in the extremal case when a portion has the same number of pieces as an early period}\label{figProofVdeC}
\end{figure}  
  
  Since each of the possible portions is present irrespective of the order in which early and late periods occur (the only requirement being the presence of at least one of each), we have the same set of portions in the $\vphi^h$
phase as in the $\bv$ phase.
\end{proof}

As a consequence of Proposition~\ref{propVdC}, it is easy to calculate $\mu_r(\bv)$, as presented above. 
The first few values are shown in Table~\ref{tblVdeC}.

\begin{table}[ht]
\small
\[
\renewcommand*{\arraystretch}{1.2}
\begin{array}{c||c|c|c|c|c|c|c|c|c|c|c|}
r&2&3&4&5&6&7&8&9&10&11&12 \\\hline
\multirow{2}{*}{$\mu_r(\bv)$}& \multirow{2}{*}{2} & {3/2} & {3/2} & {7/5} & {4/3} & {7/5} & {7/5} & {5/4} & {6/5} & 11/9 & 11/9 \\
&  & 1.5 & 1.5 & 1.4 & 1.33333 & 1.4 & 1.4 & 1.25 & 1.2 & 1.22222 & 1.22222
\end{array}
\]
  \caption{Asymptotic ratios for the binary \vdc{} sequence}\label{tblVdeC}
\end{table}

Note that $\mu_r(\bv)$ does not weakly decrease with $r$.
Comparison with our upper bounds for $\mu_r$ in Table~\ref{tblResults} shows that $\mu_r<\mu_r(\bv)$ for $r=11,\ldots,20$.
Is it the case that, for sufficiently large $r$, we always have $\mu_r<\mu_r(\bv)$?


\vspace{12pt}
\begin{flushright}
\emph{Soli Deo gloria!}
\end{flushright}

\bibliographystyle{plain}
{\footnotesize\bibliography{../bib/mybib}}

\end{document}